\documentclass[12pt,reqno]{amsart}

\addtolength{\textwidth}{2cm} \addtolength{\hoffset}{-1cm}
\addtolength{\marginparwidth}{-1cm} \addtolength{\textheight}{2cm}
\addtolength{\voffset}{-1cm}
\usepackage{amsmath,amsfonts,amsthm,amssymb,amscd, dsfont,tipa,mathtools,listings, rotating, multirow}
\usepackage{hyperref}
\usepackage{color}
\usepackage{comment}
\usepackage{seqsplit}
\usepackage{enumitem}
\usepackage[utf8]{inputenc}
\newcommand{\bburl}[1]{\textcolor{blue}{\url{#1}}}


\newcommand\be{\begin{equation}}
\newcommand\ee{\end{equation}}
\newcommand\bi{\begin{itemize}}
\newcommand\ei{\end{itemize}}
\newcommand\ben{\begin{enumerate}}
\newcommand\een{\end{enumerate}}


\newtheorem{theorem}{Theorem}

\newtheorem{proposition}{Proposition}

\theoremstyle{definition}
\newtheorem{definition}{Definition}


\numberwithin{equation}{section}

\begin{document}

\title{An Alternative Approach to Computing $\beta(2k+1)$}
\author{Naomi Tanabe} \address[Naomi Tanabe]{Department of Mathematics, Bowdoin College, Brunswick, ME 04011}\email{ntanabe@bowdoin.edu}
\author{Nawapan Wattanawanichkul} \address[Nawapan Wattanawanichkul]{University of Illinois Urbana-Champaign, Champaign, IL 04011}\email{nawapan2@illinois.edu}

\begin{abstract}
    This paper presents a new approach to evaluating the special values of the Dirichlet beta function, $\beta(2k+1)$, where $k$ is any nonnegative integer. Our approach relies on some properties of the Euler numbers and polynomials, and uses basic calculus and telescoping series. By a similar procedure, we also yield an integral representation of $\beta(2k)$. The idea of our proof adapts from a previous study by Ciaurri et al., where the authors introduced a new proof of Euler's formula for $\zeta(2k)$.
\end{abstract}

\maketitle

\section{Introduction}\label{sec:intro}
It is well known that the value of the Riemann $\zeta$-function at a positive even integer $2k$ can be expressed as
\begin{equation}\label{eq:Euler's formula} 
    \zeta(2k) \ = \ \sum_{n=1}^{\infty}\frac{1}{n^{2k}} \ = \ \cfrac{(-1)^{k-1}2^{2k-1}\pi^{2k}}{(2k)!}B_{2k},
\end{equation}
where $B_{k}$ is the $k$-th Bernoulli number. One of the classical proofs of this formula is attributed to Euler, which involves considering the expansion of $\pi z\cot(\pi z)$ in two different ways. However, over time, numerous other proofs have been developed utilizing a variety of techniques and approaches, including notable examples such as \cite{apostol}, \cite{benyi}--\cite{murty}, \cite{osler}, and \cite{tsumura}--\cite{yue}.
The multitude of proofs reflects the fundamental importance of this formula and the richness of the mathematical concepts it connects. 

On the other hand, the Riemann $\zeta$-function has been generalized in many ways, including the Dirichlet $L$-functions. In a similar manner to Equation \eqref{eq:Euler's formula}, formulas for the special values of the Dirichlet $L$-functions have been established as follows (for details see, for example, \cite[Section 7-2,~Corollary~2.10]{neukirch}).

\begin{theorem}[\cite{neukirch}]
Let $\chi$ be a primitive character of conductor $N$ and $k$ be a positive
integer satisfying $\chi(-1) = (-1)^k$. Then we have
\begin{equation}\label{eq:Euler_chi} L(k, \chi) = (-1)^{k-1} \frac{\tau(\chi)}{2}\left(\frac{2\pi i}{N}\right)^k \frac{B_{k,\overline\chi}}{k!},\end{equation}
where $B_{k,\overline{\chi}}$ is the generalized Bernoulli number associated with the conjugate of the character $\chi$, and $\tau(\chi)$ is the Gauss sum of the character defined as
\begin{equation*}
    \tau(\chi) \ = \ \sum_{a=1}^N \chi(a) e^{\frac{2\pi i a}{N}}.
\end{equation*}
\end{theorem}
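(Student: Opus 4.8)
The plan is to reduce $L(k,\chi)$ at the positive integer $s=k$ to the Fourier expansion of the Bernoulli polynomials, mirroring the classical route to \eqref{eq:Euler's formula}. I would begin by recording three standard facts. First, the generalized Bernoulli number has the closed form
\[
B_{k,\overline{\chi}} \;=\; N^{k-1}\sum_{a=1}^{N}\overline{\chi}(a)\,B_k\!\left(\tfrac{a}{N}\right),
\]
obtained by expanding the defining generating function $\sum_{a=1}^{N}\overline{\chi}(a)\,t e^{at}/(e^{Nt}-1)$ and comparing coefficients with $\sum_k B_k(x)\,t^k/k!$. Second, since $\chi$ is primitive of conductor $N$ it is separable, which gives $\sum_{a=1}^{N}\overline{\chi}(a)e^{2\pi i a n/N}=\chi(n)\,\tau(\overline{\chi})$ for every integer $n$; equivalently $\chi(n)=\tau(\overline{\chi})^{-1}\sum_{a=1}^{N}\overline{\chi}(a)e^{2\pi i an/N}$. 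Third, for $0<x<1$ one has the Hurwitz--Fourier identity $\sum_{n\neq 0}n^{-k}e^{2\pi i nx}=-\frac{(2\pi i)^k}{k!}\,B_k(x)$.

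Next I would assemble the pieces. Because $\chi(-1)=(-1)^k$, the summand $\chi(n)n^{-k}$ is invariant under $n\mapsto -n$, so $\sum_{n\neq 0}\chi(n)n^{-k}=2L(k,\chi)$. Substituting the separability expression for $\chi(n)$ and exchanging the finite sum over $a$ with the sum over $n$ (the indices with $\overline{\chi}(a)=0$, in particular $a=N$, contribute nothing, so each inner sum runs over $x=a/N\in(0,1)$ and the third fact applies), I obtain
\[
2L(k,\chi)\;=\;\frac{1}{\tau(\overline{\chi})}\sum_{a=1}^{N}\overline{\chi}(a)\sum_{n\neq 0}\frac{e^{2\pi i an/N}}{n^{k}}\;=\;-\frac{(2\pi i)^{k}}{k!\,\tau(\overline{\chi})}\sum_{a=1}^{N}\overline{\chi}(a)\,B_k\!\left(\tfrac{a}{N}\right).
\]
The closed form of $B_{k,\overline{\chi}}$ rewrites the last sum as $N^{1-k}B_{k,\overline{\chi}}$, so $2L(k,\chi)=-\frac{N}{\tau(\overline{\chi})}\big(\tfrac{2\pi i}{N}\big)^{k}\frac{B_{k,\overline{\chi}}}{k!}$. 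Finally I would simplify the constant: from $|\tau(\chi)|^{2}=N$ and $\overline{\tau(\chi)}=\chi(-1)\tau(\overline{\chi})$ one gets $\tau(\chi)\tau(\overline{\chi})=\chi(-1)N=(-1)^{k}N$, hence $N/\tau(\overline{\chi})=(-1)^{k}\tau(\chi)$, and dividing by $2$ produces the stated identity $L(k,\chi)=(-1)^{k-1}\tfrac{\tau(\chi)}{2}\big(\tfrac{2\pi i}{N}\big)^{k}\frac{B_{k,\overline{\chi}}}{k!}$.

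The one delicate point is the case $k=1$ (forced by $\chi(-1)=-1$): there both the Dirichlet series for $L(1,\chi)$ and the Fourier series for $B_1$ are only conditionally convergent, so the interchange of summations needs justification — for instance by Abel summation using the bounded partial sums $\sum_{n\le x}\chi(n)=O(1)$, or by passing to the limit $s\to 1^{+}$ from the half-plane $\mathrm{Re}(s)>1$ where everything converges absolutely. For $k\ge 2$ all series in sight converge absolutely and the rearrangement is automatic, so this is the only technical obstacle. A cleaner but more machinery-heavy alternative would be to first establish the functional equation relating $L(s,\chi)$ to $L(1-s,\overline{\chi})$ together with the negative-integer value $L(1-k,\chi)=-B_{k,\chi}/k$, and then specialize to $s=k$; this bypasses the conditional-convergence bookkeeping at the cost of invoking heavier tools.
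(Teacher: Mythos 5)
Your proof is correct, but note that the paper does not prove this theorem at all: it is quoted from Neukirch \cite{neukirch} (Section 7-2, Corollary 2.10) and used as a black box, so there is no internal proof to compare against. Your argument is the standard direct one and it checks out: the closed form $B_{k,\overline{\chi}}=N^{k-1}\sum_a\overline{\chi}(a)B_k(a/N)$ is the same identity the paper itself invokes in the proof of \ref{prop:genBerE2k}; separability of a primitive character is exactly what lets you trade $\chi(n)$ for the exponential sum; the Hurwitz--Fourier expansion $\sum_{n\neq 0}n^{-k}e^{2\pi inx}=-\frac{(2\pi i)^k}{k!}B_k(x)$ has the right sign (one can check it at $k=1$ against $\sum_{n\ge 1}\frac{\sin 2\pi nx}{n}=\pi(\tfrac12-x)$); and the constant comes out right via $\tau(\chi)\tau(\overline{\chi})=\chi(-1)N$. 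You are also right to flag $k=1$ as the only delicate point, and Abel summation (or analytic continuation from $\mathrm{Re}(s)>1$) handles it. Two very minor remarks: your parenthetical that each surviving $a/N$ lies in $(0,1)$ fails in the degenerate case $N=1$, $a=1$, but the Fourier expansion of $B_k$ for $k\ge 2$ converges at the endpoints as well, so the conclusion is unaffected; and it is worth knowing that Neukirch's own route is essentially your ``machinery-heavy alternative'' (functional equation plus $L(1-k,\chi)=-B_{k,\chi}/k$), whereas your primary argument is the more elementary Fourier-analytic one found, e.g., in Washington's book. Either way the result specializes correctly to \eqref{eq:Euler_chi} for $\chi_4$ (where $\tau(\chi_4)=2i$ and $\overline{\chi_4}=\chi_4$), consistent with Theorem \ref{thm:beta_odd}.
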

These formulas play a critical role in number theory, particularly in the study of primes in arithmetic progressions, and have many connections with various mathematical objects such as modular forms, automorphic representations, and Galois representations.

In this paper, we focus on Equation \eqref{eq:Euler_chi} particularly for the $L$-function associated with the primitive Dirichlet character $\chi_4$ modulo $4$, also known as  the \textit{Dirichlet $\beta$-function} 
\begin{equation*} 
\beta(s)=\sum_{m=0}^\infty \frac{(-1)^m}{(2m+1)^{s}}.
\end{equation*}
More precisely, it is our aim to present an alternative approach to computing $L(2k+1, \chi_4)=\beta(2k+1)$, whose formula can be stated as in the following theorem, according to Equation \eqref{eq:Euler_chi}. 

\begin{theorem}\label{thm:beta_odd}
Let $\chi_4$ be a primitive character of conductor $4$ and $k$ be a nonnegative
integer. We have
\begin{equation}\label{eq:thm_beta_odd}  
  \beta(2k+1)=  (-1)^{k+1}\left(\frac{\pi}{2}\right)^{2k+1} \frac{B_{2k+1,\chi_4}}{(2k+1)!}
 ,\end{equation}
where $B_{2k+1,\chi_4}$ is the $2k+1$-th generalized Bernoulli number associated with $\chi_4$.
\end{theorem}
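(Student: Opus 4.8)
The plan is to adapt the Ciaurri et al.\ strategy for $\zeta(2k)$: represent $1/(2m+1)^{2k+1}$ as an integral over $[0,1]$ of a scaled Euler polynomial against $\sin\bigl((2m+1)\pi t\bigr)$, and then sum over $m$ using a telescoping trigonometric identity together with a localization at $t=\tfrac12$. I will use the Euler polynomials $E_n(x)$, normalized by $E_0(x)=1$, $E_n'(x)=nE_{n-1}(x)$, and $E_n(x)+E_n(x+1)=2x^n$; the reflection formula $E_n(1-x)=(-1)^nE_n(x)$ then gives $E_{2k-1}(\tfrac12)=0$, hence $E_{2k}'(\tfrac12)=0$ for $k\ge1$, and $E_{2k}(\tfrac12)=2^{-2k}E_{2k}$, where $E_{2k}$ is the Euler number. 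A preliminary formal manipulation of generating functions yields
\[
\sum_{n\ge0}B_{n,\chi_4}\frac{t^n}{n!}=\frac{t e^{t}-t e^{3t}}{e^{4t}-1}=-\frac t2\operatorname{sech} t=-\frac12\sum_{j\ge0}\frac{E_{2j}}{(2j)!}\,t^{2j+1},
\]
so that $B_{2k+1,\chi_4}/(2k+1)!=-E_{2k}/\bigl(2\,(2k)!\bigr)$. Consequently \eqref{eq:thm_beta_odd} is equivalent to $\beta(2k+1)=\dfrac{(-1)^k\pi^{2k+1}}{4\,(2k)!}\,E_{2k}(\tfrac12)$, which is the identity I would actually prove. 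The case $k=0$ is the Leibniz--Gregory formula $\beta(1)=\pi/4$, so I take $k\ge1$ from now on.

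The core of the argument is the integral identity
\[
\int_0^1 E_{2k}(t)\,\sin\bigl((2m+1)\pi t\bigr)\,dt=\frac{2(-1)^k(2k)!}{\pi^{2k+1}(2m+1)^{2k+1}}\qquad(k\ge0,\ m\ge0),
\]
proved by induction on $k$. The base case $k=0$ is the elementary evaluation $\int_0^1\sin\bigl((2m+1)\pi t\bigr)\,dt=2/\bigl((2m+1)\pi\bigr)$. For $k\ge1$, integrating by parts twice and invoking $E_{2k}(0)+E_{2k}(1)=0$, the vanishing of $\sin\bigl((2m+1)\pi t\bigr)$ at $t=0,1$, and the derivative rules $E_{2k}'=2kE_{2k-1}$, $E_{2k-1}'=(2k-1)E_{2k-2}$, every boundary term cancels, leaving the telescoping-in-$k$ recursion
\[
\int_0^1 E_{2k}(t)\sin\bigl((2m+1)\pi t\bigr)\,dt=\frac{-2k(2k-1)}{\pi^2(2m+1)^2}\int_0^1 E_{2k-2}(t)\sin\bigl((2m+1)\pi t\bigr)\,dt,
\]
which iterates down to the base case.

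To sum over $m$, observe that for $k\ge1$ the series for $\beta(2k+1)$ converges absolutely, so
\[
\beta(2k+1)=\frac{(-1)^k\pi^{2k+1}}{2\,(2k)!}\lim_{M\to\infty}\int_0^1 E_{2k}(t)\Bigl(\sum_{m=0}^{M}(-1)^m\sin\bigl((2m+1)\pi t\bigr)\Bigr)\,dt.
\]
From $2\cos(\pi t)\sin\bigl((2m+1)\pi t\bigr)=\sin\bigl((2m+2)\pi t\bigr)+\sin\bigl(2m\pi t\bigr)$ the inner sum telescopes to $(-1)^M\sin\bigl(2(M+1)\pi t\bigr)/(2\cos\pi t)$. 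I would then pass to the limit by localizing at $t=\tfrac12$: writing $E_{2k}(t)=E_{2k}(\tfrac12)+R_k(t)$, the double vanishing $R_k(\tfrac12)=R_k'(\tfrac12)=0$ against the simple zero of $\cos\pi t$ makes $R_k(t)/\cos\pi t$ continuous on $[0,1]$, so the Riemann--Lebesgue lemma annihilates its contribution; and the constant part, after rewriting the kernel back as $\sum_{m=0}^{M}(-1)^m\sin\bigl((2m+1)\pi t\bigr)$ and integrating termwise, contributes $E_{2k}(\tfrac12)\cdot\tfrac{2}{\pi}\sum_{m=0}^{M}\frac{(-1)^m}{2m+1}\to\tfrac12E_{2k}(\tfrac12)$ by Leibniz's formula. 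Together these give $\beta(2k+1)=\dfrac{(-1)^k\pi^{2k+1}}{4\,(2k)!}E_{2k}(\tfrac12)$, whence $E_{2k}(\tfrac12)=2^{-2k}E_{2k}$ and $B_{2k+1,\chi_4}/(2k+1)!=-E_{2k}/\bigl(2\,(2k)!\bigr)$ yield \eqref{eq:thm_beta_odd}.

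I expect the main obstacle to be this last passage to the limit. Since $1/\cos\pi t$ is not integrable on $[0,1]$, limit and integral cannot simply be interchanged, and the decomposition about the singularity $t=\tfrac12$ must be organized so that the Riemann--Lebesgue portion genuinely tends to zero---this is exactly where the second-order vanishing of $R_k$ at $\tfrac12$ is used---and so that the residual constant piece is visibly $\tfrac{2}{\pi}$ times a partial sum of the Leibniz series; the reliance on $\beta(1)=\pi/4$ is unavoidable, since the $k=0$ instance of the final identity is precisely that formula. A secondary, purely bookkeeping matter is keeping consistent the several normalizations linking $\beta(2k+1)$, $E_{2k}(\tfrac12)$, the Euler numbers, and $B_{2k+1,\chi_4}$.
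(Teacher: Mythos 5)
Your proposal is correct and follows the same skeleton as the paper's proof: the auxiliary integrals $\int E_{2k}(t)\sin((2m+1)\pi t)\,dt$, the double integration by parts yielding the closed form $(-1)^k(2k)!/\bigl((2m+1)^{2k+1}\pi^{2k+1}\bigr)$ (up to the factor $2$ from working on $[0,1]$ instead of $[0,1/2]$), and a telescoping trigonometric identity followed by a Riemann--Lebesgue argument to evaluate the alternating sum over $m$. The genuine difference lies in how the singular constant term is isolated. You decompose $E_{2k}(t)=E_{2k}(1/2)+R_k(t)$, use the double zero of $R_k$ at $t=1/2$ against the simple zero of $\cos(\pi t)$ to make the remainder piece vanish in the limit, and evaluate the constant piece by termwise integration --- which forces you to feed in Leibniz's formula $\beta(1)=\pi/4$ as an external classical input and to treat $k=0$ separately, since it is exactly that formula. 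The paper instead subtracts $\frac{E_{2k}}{2^{2k}}\sin(\pi t)$ from $E_{2k}(t)$, chosen so that the modified integrand vanishes at both endpoints of $[0,1/2]$ (where its denominator $\sin(2\pi t)$ vanishes) and so that the correction integral $\int_0^{1/2}\sin(\pi t)\sin((2m+1)\pi t)\,dt$ is nonzero only for $m=0$; the modified alternating series then telescopes to exactly $0$, and the entire value is carried by the explicit $m=0$ correction. The paper's version is therefore self-contained --- its $k=0$ case \emph{derives} $\beta(1)=\pi/4$ rather than assuming it --- while yours is somewhat shorter at the cost of that one classical input; on the other hand, your direct generating-function identity $B_{2k+1,\chi_4}=-\tfrac{2k+1}{2}E_{2k}$ is a cleaner route to the Bernoulli--Euler dictionary than the paper's detour through Bernoulli polynomials.
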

To prove the above theorem, we use the techniques analogous to those introduced in \cite{ciaurri}. 
The method utilizes basic calculus and telescoping series to derive the desired formula. Applying the same techniques, we also obtain an integral representation for $\beta(2k)$, when $k$ is a positive integer.

\begin{theorem}\label{thm:beta_even}
For any positive integer $k$, we have
\begin{equation}\label{eq:thm_beta_even}  \beta(2k) \ = \ \frac{(-1)^{k-1}\pi^{2k}}{2(2k-1)!}\int_0^{1/2}E_{2k-1}(t)\sec({\pi t})dt.\end{equation}
\end{theorem}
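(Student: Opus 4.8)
The plan is to realize $\beta(2k)$ as the limit of a single family of trigonometric moments of the Euler polynomial $E_{2k-1}$, to evaluate those moments in two different ways, and to equate the two evaluations. The only properties of Euler polynomials I will need are the Appell relation $E_n'(t)=nE_{n-1}(t)$ together with the vanishing values $E_{2k-1}(\tfrac12)=0$ (the odd Euler numbers vanish) and $E_{2j}(0)=0$ for $j\ge 1$. Fix $m\ge 0$ and write $\lambda=(2m+1)\pi$, so that $\sin(\lambda/2)=(-1)^m$ and $\cos(\lambda/2)=0$. Integrating $\int_0^{1/2}E_{2k-1}(t)\cos(\lambda t)\,dt$ by parts twice and applying the Appell relation, all four boundary terms vanish (those at $t=\tfrac12$ because $\cos(\lambda/2)=0$ or $E_{2k-1}(\tfrac12)=0$, those at $t=0$ because $\sin 0=0$ or $E_{2j}(0)=0$), leaving the reduction
\[
  \int_0^{1/2}E_{2k-1}(t)\cos(\lambda t)\,dt=-\frac{(2k-1)(2k-2)}{\lambda^{2}}\int_0^{1/2}E_{2k-3}(t)\cos(\lambda t)\,dt .
\]
Iterating down to the base case $\int_0^{1/2}E_1(t)\cos(\lambda t)\,dt=\int_0^{1/2}(t-\tfrac12)\cos(\lambda t)\,dt$, which is a one-line computation, yields a closed form $\int_0^{1/2}E_{2k-1}(t)\cos((2m+1)\pi t)\,dt=c_k\,(2m+1)^{-2k}$ with $c_k$ an explicit constant of the shape $\pm(2k-1)!/\pi^{2k}$. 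Multiplying by $(-1)^m$, summing over $0\le m\le N$, and letting $N\to\infty$ then gives $\sum_{m\ge 0}(-1)^m\int_0^{1/2}E_{2k-1}(t)\cos((2m+1)\pi t)\,dt=c_k\,\beta(2k)$.

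For the second evaluation I would sum a finite geometric series to obtain the elementary kernel identity
\[
  \sum_{m=0}^{N}(-1)^m\cos\!\big((2m+1)\pi t\big)=\frac{1+(-1)^N\cos\!\big(2(N+1)\pi t\big)}{2\cos(\pi t)} .
\]
Since the left-hand sum is finite it passes through the integral, so
\[
  \sum_{m=0}^{N}(-1)^m\int_0^{1/2}E_{2k-1}(t)\cos\!\big((2m+1)\pi t\big)\,dt
  =\frac12\int_0^{1/2}E_{2k-1}(t)\sec(\pi t)\,dt+\frac{(-1)^N}{2}R_N,
\]
where $R_N=\int_0^{1/2}\dfrac{E_{2k-1}(t)}{\cos(\pi t)}\cos\!\big(2(N+1)\pi t\big)\,dt$. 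Comparing with the first evaluation and letting $N\to\infty$ gives $c_k\,\beta(2k)=\tfrac12\int_0^{1/2}E_{2k-1}(t)\sec(\pi t)\,dt$, and solving for $\beta(2k)$ produces \eqref{eq:thm_beta_even}, the precise constant being the one forced by the integration-by-parts recursion above.

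The one genuinely delicate point --- and the step I expect to be the main obstacle --- is showing $R_N\to 0$. A priori the factor $\sec(\pi t)$ has a simple pole at $t=\tfrac12$, so the integrand of $R_N$ looks singular there; but the numerator $E_{2k-1}(t)$ vanishes at $t=\tfrac12$ (this is again the statement $E_{2k-1}(\tfrac12)=0$), and the zero is simple, so $t\mapsto E_{2k-1}(t)/\cos(\pi t)$ extends to a continuous --- hence bounded --- function on $[0,\tfrac12]$. The Riemann--Lebesgue lemma (or, for a self-contained argument, one further integration by parts) then forces $R_N\to 0$. Everything else is bookkeeping: the interchange of sum and integral is over a finite sum, and no convergence subtlety arises in the passage $N\to\infty$, which takes place inside an equality between two ordinary real numbers. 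I would expect the companion Theorem~\ref{thm:beta_odd} to fall out of the same machinery, with $E_{2k}$ replacing $E_{2k-1}$ and an evaluation of the resulting sum at $t=\tfrac12$ replacing the integration against $\sec(\pi t)$.
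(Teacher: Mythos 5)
Your proposal is, in substance, the paper's own proof: the same auxiliary integrals $\int_0^{1/2}E_{2k-1}(t)\cos((2m+1)\pi t)\,dt$ (the paper's $J(k-1,m)$), the same double integration by parts driving the recurrence down to the $E_1$ base case, the same alternating sum producing $\beta(2k)$, and the same second evaluation --- your closed-form kernel identity $\sum_{m=0}^{N}(-1)^m\cos((2m+1)\pi t)=\bigl(1+(-1)^N\cos(2(N+1)\pi t)\bigr)/(2\cos(\pi t))$ is exactly the paper's telescoping of $\cos((2m+1)\pi t)=\bigl(\cos(2m\pi t)+\cos((2m+2)\pi t)\bigr)/(2\cos\pi t)$ summed in closed form, and your treatment of the removable singularity at $t=1/2$ and of $R_N\to 0$ via one more integration by parts matches the paper's handling of $h(t)=E_{2k-1}(t)/(2\cos\pi t)$.

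The one thing to be careful about is the constant you leave as ``$\pm$''. Carrying out your recursion gives
\[
\int_0^{1/2}E_{2k-1}(t)\cos((2m+1)\pi t)\,dt \;=\; \frac{(-1)^{k}(2k-1)!}{(2m+1)^{2k}\pi^{2k}},
\]
since the base case equals $-1/\lambda^{2}$ and each of the $k-1$ reduction steps contributes a factor $-1$. This yields $\beta(2k)=\frac{(-1)^{k}\pi^{2k}}{2(2k-1)!}\int_0^{1/2}E_{2k-1}(t)\sec(\pi t)\,dt$, i.e.\ the sign $(-1)^{k}$ rather than the $(-1)^{k-1}$ appearing in \eqref{eq:thm_beta_even}. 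This agrees with the paper's own intermediate Equation \eqref{eq:betaJ}; the sign flip occurs only in the paper's final displayed line. A check at $k=1$ settles it: $\int_0^{1/2}E_1(t)\sec(\pi t)\,dt=\int_0^{1/2}\frac{t-1/2}{\cos(\pi t)}\,dt=-\frac{2G}{\pi^{2}}$ (with $G=\beta(2)$ Catalan's constant), so the prefactor must be $-\pi^{2}/2=(-1)^{1}\pi^{2}/2$. So do not assume the recursion ``forces'' the constant printed in the statement --- compute it, and you will find the statement's sign needs correcting.
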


In the next section, we recall definitions and important properties of Euler polynomials $E_k(x)$, which are crucial to our work. Then Section \ref{sec:compute beta} is devoted to our proof of Theorem \ref{thm:beta_odd}, a formula for $\beta(2k+1)$. Lastly, we derive Theorem \ref{thm:beta_even}, an integral representation for $\beta(2k)$, in Section \ref{sec:beta(2k)}.

\section{Euler Numbers and Polynomials}\label{sec:properties}

In this section, we introduce some useful properties of the Euler numbers and polynomials, which will be used repeatedly in Sections \ref{sec:compute beta} and \ref{sec:beta(2k)}.  

\begin{definition}\label{def:Eulernumber} The \textit{$k$-th Euler number} $E_k$ is defined by the generating function
\begin{equation} \label{eq:EulernumGenerating}\sum_{k=0}^{\infty} E_k\frac{t^k}{k!}  \ = \ \frac{2e^{t}}{e^{2t}+1}.\end{equation}  \end{definition} 
By expanding the right-hand side of the equation above, the first few Euler numbers can be observed as 
    $$ E_0 = 1, \ E_1=0, \ E_2= -1, \ E_3=0, \ E_4=5, \ \dots.$$

\begin{definition}\label{def:Eulerpoly} The \textit{$k$-th Euler polynomial $E_k(x)$} is defined by the generating function
\begin{equation} \label{eq:EuPolyGenerating}
\sum_{k=0}^{\infty} E_k(x)\frac{t^k}{k!} \ = \ \frac{2e^{xt}}{e^t+1},\text{ where } |t| \le \pi, x \in \mathbb{R}.\end{equation} \end{definition}
Again, by expanding the right-hand side of the above equation, we see that the first few Euler polynomials are
    $$E_0(x) = 1, \ E_1(x)=x -\frac{1}{2}, \ E_2(x)= x^2-x, \ E_3=x^3 - \frac{3}{2}x^2+\frac{1}{4}, \ \dots .$$

Using Definitions \ref{def:Eulernumber} and \ref{def:Eulerpoly}, we observe the following noteworthy proposition.

\begin{proposition}\label{props:Euler} For the Euler polynomials and $k\in \mathbb{Z}_{\ge 0}$, the followings are true: 
\begin{enumerate}[label=(1.\arabic*)]
  \item \label{prop:E(1-x)} \quad $E_k(1-x)  =  (-1)^kE_k(x)$ and, in particular, $\displaystyle E_{2k+1}\left(1/2\right)  =  0,$ \vspace{0.1in}
  \item \label{prop:powerof2} \quad $\displaystyle E_k  =  2^k E_k\left(1/2\right),$ \vspace{0.1in}
  \item \label{prop:sumx,x+1} \quad $E_k(x+1) + E_k(x) = 2x^k,$ \vspace{0.1in}
  \item \label{prop:eval2k} \quad $E_{2k}(1) = E_{2k}(0)= 0,$ \vspace{0.1in}
  \item \label{prop:Ederivative)} \quad $E'_0(x) = 0$ and $E'_k(x) = kE_{k-1}(x)$ when $k \ge 1,$ \vspace{0.1in}
  \item \label{prop:genBerE2k} \quad $\displaystyle E_{2k}\left(1/2\right) = - \frac{B_{2k+1,\chi_4}}{(2k+1)2^{2k-1}}$, where $B_{2k+1,\chi_4}$ is the generalized Bernoulli number associated with the primitive character modulo 4.
\end{enumerate}
\end{proposition}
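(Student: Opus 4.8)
The plan is to read off all six identities from the two defining generating functions \eqref{eq:EulernumGenerating} and \eqref{eq:EuPolyGenerating} by comparing coefficients of powers of $t$. Five of the six then follow from a single well-chosen operation on \eqref{eq:EuPolyGenerating} — sending $x$ to $1-x$, sending $x$ to $x+1$, rescaling $t$, or differentiating in $x$ — and only \ref{prop:genBerE2k} requires importing an outside object, namely the generating function of the generalized Bernoulli numbers attached to $\chi_4$; I would save that one for last.

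\emph{The coefficient-comparison identities.} For \ref{prop:E(1-x)} I substitute $x \mapsto 1-x$ in \eqref{eq:EuPolyGenerating}; the right side becomes $2e^{(1-x)t}/(e^t+1) = 2e^{-xt}/(1+e^{-t})$, which is \eqref{eq:EuPolyGenerating} with $t$ replaced by $-t$, so $E_k(1-x) = (-1)^k E_k(x)$, and choosing $x = 1/2$ with odd index forces $E_{2k+1}(1/2) = 0$. For \ref{prop:sumx,x+1} I add \eqref{eq:EuPolyGenerating} to its own version with $x$ replaced by $x+1$; the numerators combine to $2e^{xt}(e^t+1)$, the denominator cancels, and $2e^{xt} = \sum_k 2x^k t^k/k!$ gives $E_k(x+1)+E_k(x) = 2x^k$. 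For \ref{prop:powerof2}, putting $x = 1/2$ in \eqref{eq:EuPolyGenerating} and then replacing $t$ by $2t$ turns the right side into $2e^t/(e^{2t}+1)$, which is exactly the right side of \eqref{eq:EulernumGenerating}, so $2^k E_k(1/2) = E_k$. For \ref{prop:Ederivative)}, differentiating \eqref{eq:EuPolyGenerating} in $x$ multiplies the right side by $t$, shifting the summation index by one and yielding $E_0'(x) = 0$ and $E_k'(x) = k E_{k-1}(x)$ for $k \geq 1$. Finally \ref{prop:eval2k} (for $k \geq 1$) is a corollary of the first and third: setting $x = 0$ in \ref{prop:sumx,x+1} gives $E_k(1) = -E_k(0)$, while $x = 0$ in \ref{prop:E(1-x)} gives $E_k(1) = (-1)^k E_k(0)$, and for even index these force $E_{2k}(0) = E_{2k}(1) = 0$.

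\emph{The link to generalized Bernoulli numbers.} For \ref{prop:genBerE2k} I start from the standard generating function $\sum_{n \geq 0} B_{n,\chi_4}\, t^n/n! = \sum_{a=1}^{4} \chi_4(a)\, t e^{at}/(e^{4t}-1) = t(e^t - e^{3t})/(e^{4t}-1)$. The single algebraic step is the factorization $e^{4t}-1 = (e^{2t}-1)(e^{2t}+1)$ together with $e^t - e^{3t} = -e^t(e^{2t}-1)$, which collapses the right-hand side to $-te^t/(e^{2t}+1)$; by \eqref{eq:EulernumGenerating} this equals $-\tfrac{t}{2}\sum_{k\geq 0} E_k\, t^k/k!$. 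Comparing the coefficient of $t^{2k+1}$ on both sides gives $B_{2k+1,\chi_4} = -\tfrac{2k+1}{2}E_{2k}$, and substituting $E_{2k} = 2^{2k}E_{2k}(1/2)$ from \ref{prop:powerof2} rearranges into the claimed formula. I expect this to be the only genuine obstacle, and even here the difficulty is bookkeeping rather than depth: one has to recall the correct normalization of the generalized Bernoulli numbers for a modulus-$4$ character and keep the index shift straight, but once the generating function telescopes down to $-te^t/(e^{2t}+1)$ the rest is forced.
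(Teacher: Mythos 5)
Your proof is correct, and for the first five items it coincides with the paper's: the same substitutions $x\mapsto 1-x$, $x\mapsto x+1$, $x=1/2$, the same differentiation of \eqref{eq:EuPolyGenerating}, and the same combination of \ref{prop:E(1-x)} and \ref{prop:sumx,x+1} at $x=0$ for \ref{prop:eval2k}. (Your parenthetical restriction to $k\ge 1$ in \ref{prop:eval2k} is in fact a point in your favor: as stated the identity fails for $k=0$ since $E_0\equiv 1$, and the paper glosses over this.) Where you genuinely diverge is \ref{prop:genBerE2k}. The paper imports two external identities --- the expression $B_{k,\chi}=N^{k-1}\sum_a\chi(a)B_k(a/N)$ in terms of Bernoulli polynomials, and the Srivastava--Pint\'er relation $E_{k-1}(x)=\tfrac{2^k}{k}\bigl(B_k(\tfrac{x+1}{2})-B_k(\tfrac{x}{2})\bigr)$ --- and compares them at $x=1/2$. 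You instead start from the defining generating function $\sum_n B_{n,\chi_4}t^n/n! = t(e^t-e^{3t})/(e^{4t}-1)$, factor $e^{4t}-1=(e^{2t}-1)(e^{2t}+1)$ against $e^t-e^{3t}=-e^t(e^{2t}-1)$, and land directly on $-\tfrac{t}{2}\cdot\tfrac{2e^t}{e^{2t}+1}$, i.e.\ on \eqref{eq:EulernumGenerating}; comparing coefficients of $t^{2k+1}$ gives $B_{2k+1,\chi_4}=-\tfrac{2k+1}{2}E_{2k}$, and \ref{prop:powerof2} finishes. I checked the coefficient bookkeeping and it is right. Your route is more self-contained (everything reduces to generating-function algebra, with no appeal to Bernoulli polynomials or to the literature), at the cost of having to recall the correct normalization of $\sum_n B_{n,\chi}t^n/n!$; the paper's route is longer on citations but shorter on computation. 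Either is acceptable.
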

\begin{proof} To prove the first statement, we substitute $x$ with $1-x$ in Equation \eqref{eq:EuPolyGenerating} and get
\[\quad \sum_{k=0}^{\infty} E_k(1-x) \frac{t^k}{k!} \ = \ \frac{2e^{(1-x)t}}{e^t+1} \ = \  \frac{2e^{-xt}}{e^{-t}+1} \ = \ \sum_{k=0}^{\infty} (-1)^k E_k(x) \frac{t^k}{k!}.\]
Comparing the coefficients of $t^k$ term on both sides of the equation gives us the desired result. The second part in \ref{prop:E(1-x)} then follows by evaluating the equation at $x= 1/2$ when $k$ is an odd integer.
The second and third statements are obtained similarly, by evaluating  Equation \eqref{eq:EuPolyGenerating} at $x =1/2$ and at $x-1,$ respectively.

The statement \ref{prop:eval2k} follows from substituting $x=0$ to equations in  \ref{prop:E(1-x)} and \ref{prop:sumx,x+1}, which yields $E_{2k}(1)=E_{2k}(0)$ and $E_{2k}(1) + E_{2k}(0) = 0$, respectively.

To verify \ref{prop:Ederivative)}, we differentiate Equation \eqref{eq:EuPolyGenerating} with respect to $x$;
\begin{equation*} \hspace{0.3in}
\sum_{k=0}^{\infty}E'_k(x) \frac{t^k}{k!} \ = \ \frac{d}{dx} \frac{2e^{xt}}{e^t+1} \ = \ t \cdot \frac{2e^{xt}}{e^t+1}.
\end{equation*}
The right-hand side of the equation is then the product of $t$ and the generating function of the the Euler polynomials.  Therefore, we see that
\begin{equation} \qquad
\sum_{k=0}^{\infty}E'_k(x) \frac{t^k}{k!} \ = \   \sum_{k=0}^{\infty}E_k(x) \frac{t^{k+1}}{k!} \ = \ \sum_{k=1}^{\infty}E_{k-1}(x) \frac{t^{k}}{(k-1)!},\label{eqn:der Euler Polynomials}
\end{equation}
which means $E'_k(x) = k E'_{k-1}(x)$ when $k \ge 1$. Since the constant term of the right-hand side of Equation \eqref{eqn:der Euler Polynomials} is 0, we conclude that $E'_0(x) = 0$.

Lastly, for \ref{prop:genBerE2k}, we recall the relation between the $k$-th generalized Bernoulli number $B_{k, \chi}$ associated with $\chi$ and the $k$-th Bernoulli polynomial $B_k(x)$ given by  
\begin{equation*}\displaystyle B_{k,\chi} \ = \ N^{k-1} \sum_{a=1}^N \chi(a) B_k\left(\frac{a}{N}\right).\end{equation*}
Here, $N$ is the conductor of the character $\chi$. See, for example, \cite[Section~4.3]{arakawa}. In particular, when $\chi = \chi_4$, 
\begin{equation}
    B_{k,\chi_4} \ = \ 4^{k-1}\left(B_k\left(\frac{1}{4}\right) - B_k\left(\frac{3}{4}\right)\right).  \label{eqn:gen Bernoulli mod4}
\end{equation} 
Likewise, the Euler polynomials can be related to the Bernoulli polynomials as  \[E_{k-1}(x) \ = \ \frac{2^k}{k} \left(B_k\left(\frac{x+1}{2}\right) - B_k\left(\frac{x}{2}\right)\right),\]
(see \cite{srivastava} for details). 
In particular, when $x = 1/2$, we have
\begin{equation}
E_{k-1}\left(\frac{1}{2}\right) \ = \ \frac{2^k}{k} \left(B_k\left(\frac{3}{4}\right)-B_k\left(\frac{1}{4}\right)\right).  \label{eqn:Srivastava mod4}
\end{equation} 
Comparing Equations \eqref{eqn:gen Bernoulli mod4} and \eqref{eqn:Srivastava mod4} gives us
\begin{equation*}
     E_{k-1}\left(\frac{1}{2}\right) \ = \ \frac{2^k}{k}\cdot \frac{-B_{k,\chi_4}}{4^{k-1}} \ = \ -\frac{B_{k,\chi_4}}{2^{k-2}k}. 
\end{equation*}
A proof is completed by replacing $k$ with $2k+1$.
\end{proof}

\section{Computing \texorpdfstring{$\beta(2k+1)$}{Lg}}\label{sec:compute beta}

In this section, we prove the formula for $\beta(2k+1)$ as stated in Theorem \ref{thm:beta_odd}.

\begin{proof}[Proof of Theorem \ref{thm:beta_odd}]
We will use the following auxiliary integral
\begin{equation}\label{eq:AuxIntegral}
 I(k,m) \ = \ \int_0^{1/2} E_{2k}(t)\sin((2m+1)\pi t) \ dt,
\end{equation} for integers $k,m \ge 0$. For clarity, we split the proof into three main steps.

\textbf{1) Summing auxiliary functions.} First, we find the recurrence relation among the auxiliary functions $I(k, m)$ and derive the closed form solution. We begin with the simplest case when $k=0$. Using the fact that $E_0(t) = 1$ for any real $t$, we have that
\begin{equation} \label{eqn:I(0,m)}
I(0,m)
\ = \ \int_0^{1/2} \sin((2m+1)\pi t)\,dt  \ = \  \frac{1}{(2m+1)\pi}. 
\end{equation}
For $k \ge 1$, we integrate Equation \eqref{eq:AuxIntegral} by parts and obtain
\begin{equation*}
\begin{split}
I(k,m) &= - \left[E_{2k}(t)\frac{\cos((2m+1)\pi t)}{(2m+1)\pi}  \right]_{t=0}^{t=1/2} + \int_0^{1/2}E'_{2k}(t)\frac{\cos((2m+1)\pi t)}{(2m+1)\pi} \, dt\\ 
&= \frac{1}{(2m+1)\pi}\int_0^{1/2}E'_{2k}(t)\cos((2m+1)\pi t) \, dt,
\end{split}
\end{equation*}
where the last equality follows from \ref{prop:eval2k}.

Now, applying \ref{prop:Ederivative)} and integrating by parts again, we get 
\begin{equation*}
\begin{split}
I(k,m) &=  - \frac{2k(2k-1)}{(2m+1)^2\pi^2} \int_0^{1/2}E_{2k-2}(t)\sin((2m+1)\pi t) \, dt\\
  &= \frac{-2k(2k-1)}{(2m+1)^2\pi^2} I(k-1,m). 
\end{split}
\end{equation*}
Applying this recurrence relation repeatedly, together with the value of $I(0, m)$ from Equation $\eqref{eqn:I(0,m)}$, we obtain the closed form of our auxiliary functions
\begin{equation*}
\begin{split}
    I(k,m) &=  \frac{-2k(2k-1)}{(2m+1)^2\pi^2}\cdot  \frac{-(2k-2)(2k-3)}{(2m+1)^2\pi^2} \dots \frac{-2\cdot1}{(2m+1)^2\pi^2}\cdot \frac{1}{(2m+1)\pi} \nonumber \\
    &= \frac{(-1)^k(2k)!}{(2m+1)^{2k+1}\pi^{2k+1}},
\end{split}
\end{equation*} for any nonnegative integers $k$ and $m$.
Multiplying each $I(k, m)$ by $(-1)^m$ and summing up over nonnegative integers $m$ relate $I(k,m)$ to $\beta(2k+1)$ as
\begin{equation}
    \sum_{m=0}^{\infty}(-1)^m I(k,m) \ = \ \sum_{m=0}^{\infty} \frac{(-1)^m(-1)^k(2k)!}{(2m+1)^{2k+1}\pi^{2k+1}} \ = \ \frac{(-1)^k(2k)!}{\pi^{2k+1}} \beta(2k+1). \label{eqn:sumI+L}
\end{equation}

\textbf{2) Modifying auxiliary functions.} We now modify our auxiliary functions $I(k, m)$ as follows:
\begin{equation*}
    I^*(k,m) \ := \ \int_0^{1/2} E^*_{2k}(t) \sin((2m+1)\pi t) \,dt,
\end{equation*}
where 
\begin{equation*} E^*_{2k}(t) \ := \ E_{2k}(t)-\frac{E_{2k}}{2^{2k}}\sin(\pi t). \end{equation*}
This can also be written as 
\begin{equation*}
\begin{split}
I^*(k,m) &= \int_0^{1/2} \left(E_{2k}(t)-\frac{E_{2k}}{2^{2k}}\sin(\pi t)\right) \sin((2m+1)\pi t) \,dt  \\
&= I(k,m) - \int_0^{1/2}\frac{E_{2k}}{2^{2k}}\sin(\pi t)\sin((2m+1)\pi t)\,dt,
\end{split}
\end{equation*}
and therefore, 
\begin{equation}
    I(k,m) \ = \ I^*(k,m) + \frac{E_{2k}}{2^{2k}}\int_0^{1/2}\sin(\pi t)\sin((2m+1)\pi t)\,dt. \label{eq:relateI,Istar}
\end{equation}
Furthermore, applying the following trigonometric identity
\begin{equation*}
    \sin(\alpha)\sin(\beta) \ = \ \frac{\cos(\alpha-\beta) - \cos(\alpha+\beta)}{2}
\end{equation*}
to the integrand in Equation \eqref{eq:relateI,Istar} yields that
\begin{equation*}
\begin{split}
    \frac{E_{2k}}{2^{2k}}\int_0^{1/2}\sin(\pi t)\sin((2m+1)\pi t)\,dt &= \frac{E_{2k}}{2^{2k}}\int_0^{1/2}\frac{\cos(2m\pi t) - \cos((2m+2)\pi t)}{2}\,dt \\
     &=  \begin{cases}
        \displaystyle \frac{E_{2k}}{2^{2k+2}} \hspace{0.24in} \text{if } m=0,\\
        0 \hspace{0.53in} \text{otherwise.}
    \end{cases}
\end{split}
\end{equation*}
We also note that \ref{prop:powerof2} and \ref{prop:genBerE2k} give
\begin{equation*}
    \frac{E_{2k}}{2^{2k+2}}  \ = \  -\cfrac{B_{2k+1,\chi_4}}{(2k+1)2^{2k+1}}.
\end{equation*}
Hence, Equation \eqref{eq:relateI,Istar} can be written as
\begin{equation*}
    I(k,m) \ = \ 
\begin{cases}
    I^*(k,m) -\cfrac{B_{2k+1,\chi_4}}{(2k+1)2^{2k+1}},& \text{if } m = 0,\\
    I^*(k,m),& \text{if } m \ge 1.
\end{cases} 
\end{equation*}
Thus,
\begin{align}
    \sum_{m=0}^{\infty}(-1)^mI(k,m)\nonumber 
    &= \left(I^*(k,0) - \cfrac{B_{2k+1,\chi_4}}{(2k+1)2^{2k+1}}\right)+ \sum_{m=1}^{\infty}(-1)^mI^*(k,m) \nonumber\\
    &= \sum_{m=0}^{\infty}(-1)^mI^*(k,m)-\cfrac{B_{2k+1,\chi}}{(2k+1)2^{2k+1}}.
\label{eq:showsumI*=0}
\end{align}

Comparing this with Equation \eqref{eqn:sumI+L}, it boils down to simplify Equation \eqref{eq:showsumI*=0} to obtain the desired result. Indeed, we will show that $\sum_{m=0}^{\infty}(-1)^mI^*(k,m)=0$ in the following subsection.

\textbf{3) Computing telescoping series.}
We now show that the infinite series $\sum_{m=0}^{\infty}(-1)^m I^*(k,m),$ which is defined as  $\lim_{N\to\infty} \sum_{m=0}^{N}(-1)^m I^*(k,m)$, converges to 0 by using trigonometric identities and telescoping sums. Consider
\begin{equation*}
\begin{split}
&\lim_{N\to\infty} \sum_{m=0}^{N}(-1)^mI^*(k,m)\\
&= \lim_{N\to\infty} \left(I^*(k,0) - I^*(k,1)+ \cdots +(-1)^{N-1}I^*(k,N-1)+(-1)^NI^*(k,N) \right)\\
&= \lim_{N\to\infty} \int_0^{1/2} \Big(E^*_{2k}(t)\sin(\pi t) -  E^*_{2k}(t)\sin(3\pi t) + \cdots\\
& \hspace{0.4in} +(-1)^{N-1} E^*_{2k}(t)\sin((2N-1)\pi t) +(-1)^N E^*_{2k}(t)\sin((2N+1)\pi t) \Big) \,dt. \end{split}
\end{equation*}
Applying the following trigonometric identity 
\begin{equation*}
    \sin((2m+1)x) \ = \ \cfrac{\cos((2m-1)x)-\cos((2m+3)x)}{2\sin(2x)},
\end{equation*}
we obtain the telescoping series
\begin{align}\label{eqn:telescopic} 
& \displaystyle \lim_{N \to \infty}\int_0^{1/2}  \Big( E^*_{2k}(t)\cdot\cfrac{\cos(-\pi t)  -  \cos(3\pi t)}{2\sin(2\pi t)}  -  E^*_{2k}(t)\cdot\cfrac{\cos(\pi t)  -  \cos(5\pi t)}{2\sin(2\pi t)} \nonumber \\
& \hspace{0.5in}  +  E^*_{2k}(t)\cdot\cfrac{\cos(3\pi t)-\cos(7\pi t)}{2\sin(2\pi t)}  -  E^*_{2k}(t)\cdot\cfrac{\cos(5\pi t)- \cos(9\pi t)}{2\sin(2\pi t)} +  \cdots  \nonumber \\
&\hspace{0.5in}  +  (-1)^{N} E^*_{2k}(t)\cdot\cfrac{\cos((2N-1)\pi t)-\cos((2N+3)\pi t)}{2\sin(2\pi t)} \Big)  \,dt. 
\end{align}
To cancel out repetitive terms in Equation \eqref{eqn:telescopic}, we need to extend the function  \[f(t) \ = \ \cfrac{E^*_{2k}(t)}{\sin(2 \pi t)}, \qquad \text{ for } t\in (0, 1/2),\]
to $t= 0$ and $1/2$.
 
When $t=0$, we note that $E^*_{2k}(0) = E_{2k}(0) - \frac{E_{2k}}{2^{2k}}\cdot \sin( 0) =0$ by \ref{prop:eval2k}. We then evaluate the limit of $f(t)$ when $t$ approaches $0$ using L'H\^opital's rule as follows
\begin{equation*}
\lim_{t\to 0} \frac{E_{2k}(t) - \cfrac{E_{2k}}{2^{2k}}\sin(\pi t)}{\sin(2\pi t)}
\ = \ \frac{2k E_{2k-1}(0)-\cfrac{E_{2k}}{2^{2k}}\pi}{2\pi},
\end{equation*}
which is some constant.

As for $t=1/2$, notice that $E^*_{2k}(1/2) = E_{2k}(1/2) - \frac{E_{2k}}{2^{2k}}\sin(\pi/2) =0$ by  \ref{prop:powerof2}. Then the limit of $f(t)$ as $t$ approaches $1/2$ can be evaluated as 
\begin{equation*}
\lim_{t\to1/2} \frac{E_{2k}(t) - \cfrac{E_{2k}}{2^{2k}}\sin(\pi t)}{\sin(2\pi t)}
\ = \ \frac{2k\cdot E_{2k-1}(1/2)- \cfrac{E_{2k}}{2^{2k}}\pi\cos(\pi/2)}{2\pi \cos(\pi)},
\end{equation*}
which equals $0$ by using \ref{prop:E(1-x)}. Thus, $f(t)$ is well-defined on $[0,1/2]$, and, hence, 
most of the terms in Equation \eqref{eqn:telescopic} get cancelled. Moreover, since the first two terms $f(t)\cos(-\pi t)$ and $f(t)\cos(\pi t)$ are equal, we are left with
\begin{align}\label{eq:even case}
    &\sum_{m=0}^{\infty}(-1)^mI^*(k,m) \nonumber  \\
    &= \displaystyle \lim_{N \to \infty}(-1)^{N-1} \int_0^{1/2} \frac{E^*_{2k}(t)}{2\sin(2\pi t)}\left(\cos((2N+1)\pi t)-\cos((2N+3)\pi t)\right)\,dt  \nonumber \\
    &= \displaystyle \lim_{N \to \infty}(-1)^{N-1} \int_0^{1/2} \frac{E^*_{2k}(t)}{2\sin(2\pi t)}\left(-2\sin((2N+2)\pi t)\sin(-\pi t)\right)\,dt  \nonumber \\
    &= \displaystyle \lim_{N \to \infty}(-1)^{N-1}\int_0^{1/2} \frac{E^*_{2k}(t)}{2\cos(\pi t)}\left(\sin((2N+2)\pi t)\right)\,dt.
\end{align}

To proceed further, we justify that the function $\frac{E^*_{2k}(t)}{2\cos(\pi t)}\sin((2N+2)\pi t)$ is differentiable on $[0,1/2]$ with continuous derivative. Similar to the case of $f(t)$, we extend the function \begin{equation*}
g(t) \ = \ \cfrac{E^*_{2k}(t)}{\cos(\pi t)}, \hspace{0.2in} \text{ for } t\in [0, 1/2),
\end{equation*} to $t=1/2$, which can be achieved by applying  \ref{prop:E(1-x)} and \ref{prop:Ederivative)}:
\begin{equation*}
\lim_{t\to1/2} \frac{E_{2k}(t) - \cfrac{E_{2k}}{2^{2k}}\sin(\pi t)}{2\cos(\pi t)}
\ = \ 0. \end{equation*}
Therefore, $g(t)$ is differentiable with continuous derivative on $[0,1/2]$.

We now consider the integral on the right-hand side of the last equation of \eqref{eq:even case}. Writing $(2N+2)\pi = R$ and integrating by parts give 
\begin{equation*}
\int_0^{1/2}g(t)\sin(Rt) \,dt \ = \ -\cfrac{\cos(R/2)}{R} g(1/2) + \frac{1}{R} g(0) + \int_0^{1/2}g'(t)\cfrac{\cos(Rt)}{R}\,dt.\\
\end{equation*}
The boundedness of $g(0)$, $g(1/2)$ and $g'(t)$ shows that each term in the above sum approaches zero as $R$ approaches infinity, and therefore \[\lim_{N\to \infty} \sum_{m=0}^{N}(-1)^mI^*(k,m)=0.\] 
Thus, Equation \eqref{eq:showsumI*=0} is simplified as
$$ \sum_{m=0}^{\infty}(-1)^mI(k,m) \ = \  - \cfrac{B_{2k+1,\chi}}{(2k+1)2^{2k+1}}.$$ 
This, together with Equation \eqref{eqn:sumI+L},  completes the proof. \end{proof}

\section{An Integral Representation of \texorpdfstring{$\beta(2k)$}{Lg}}\label{sec:beta(2k)}

This section is devoted to obtaining the  integral representation of $\beta(2k)$ as stated in Theorem \ref{thm:beta_even}. In this case, we split the proof into two steps.

\begin{proof}[Proof of Theorem \ref{thm:beta_even}] We consider a slightly different auxiliary integrals 
\begin{equation} \label{defi:J(k,m)}
J(k,m) \ = \ \int_{0}^{1/2}E_{2k+1}(t)\cos((2m+1)\pi t)\,dt
\end{equation}
with integers $k,m \ge 0$. 

\textbf{1) Summing auxiliary functions.}
Similar to the case of $I(0,m)$, applying \ref{prop:Ederivative)} and integrating by parts yield
\begin{equation*}
J(0,m)
\ = \ \frac{E_1\left(\frac{1}{2}\right)\sin\left(\frac{(2m+1)}{2}\pi\right)- E_1(0)\sin(0)}{(2m+1)\pi}  -  \int_0^{1/2}E_0(t)\frac{\sin((2m+1)\pi t)}{(2m+1)\pi}\,dt.
\end{equation*}
Then, using \ref{prop:E(1-x)}, together with the facts that $E_{0}(t) = 1$ and $\sin(0) = 0$, we are left with
\begin{equation}\label{eq:J(0,m)}
J(0,m) \ = \  -  \int_0^{1/2}\frac{\sin((2m+1)\pi t)}{(2m+1)\pi}\,dt
\ = \  - \frac{1}{(2m+1)^2\pi^2}.
\end{equation}

Now we consider Equation \eqref{defi:J(k,m)} when $k \ge 1$. Integrating by parts twice, along with \ref{prop:E(1-x)}, \ref{prop:eval2k}, and \ref{prop:Ederivative)}, gives us
\begin{equation} \label{eq:recurJ(k,m)}
J(k,m)
= - \frac{(2k+1)(2k)}{(2m+1)^2\pi^2}J(k-1,m).
\end{equation}

Putting Equations \eqref{eq:J(0,m)} and \eqref{eq:recurJ(k,m)} together provides the closed form of $J(k, m)$ as
\begin{equation*}\label{eq:closedJ(k,m)}
J(k,m) 
\ = \ \frac{(-1)^{k+1}(2k+1)!}{(2m+1)^{2k+2}\pi^{2k+2}}.
\end{equation*}
Therefore, we can relate $J(k,m)$ to $\beta(2k)$ as
\begin{equation*}
    \sum_{m=0}^{\infty}(-1)^mJ(k-1,m) \ = \ \sum_{m=0}^{\infty}(-1)^m\frac{(-1)^{k}(2k-1)!}{(2m+1)^{2k}\pi^{2k}} \ = \  \frac{(-1)^{k}(2k-1)!}{\pi^{2k}} \beta(2k), 
\end{equation*}
for any $k \ge 1$, or equivalently, \begin{equation}
    \label{eq:betaJ} \beta(2k) \ = \ \frac{(-1)^{k}\pi^{2k}}{(2k-1)!}\sum_{m=0}^{\infty}(-1)^mJ(k-1,m).
\end{equation}

\textbf{2) Computing telescoping series.} We now simplify the right-hand side of Equation \eqref{eq:betaJ} by exploiting a telescoping series and the  trigonometric identity 
\begin{equation}
     \cos((2m+1)\pi t) \ = \ \frac{\cos(2m\pi t) + \cos((2m+2)\pi t)}{2\cos(\pi t)}.\label{eq:trigJ} \end{equation} 
Applying Equation \eqref{eq:trigJ} in Equation \eqref{defi:J(k,m)}, we obtain that
\begin{equation*}
J(k-1,m) \ = \ \int_{0}^{1/2}E_{2k-1}(t)\frac{\cos(2m\pi t) + \cos((2m+2)\pi t)}{2\cos(\pi t)} \ dt. 
\end{equation*}
Multiplying each $J(k-1, m)$ by $(-1)^m$ and summing up over nonnegative integers $m$ give 
\begin{equation*}
\begin{split}
& \sum_{m=0}^{\infty}(-1)^m J(k-1,m)  \\
&\hspace{.2in} =\lim_{N\to\infty}\int_{0}^{1/2}\left(E_{2k-1}(t)\frac{\cos(0) + \cos(2\pi t)}{2\cos(\pi t)}-\cdots \right. \\
&\hspace{1.5in} \left. +(-1)^N E_{2k-1}(t)\frac{\cos(2N \pi t)+\cos((2N+2)\pi t)}{2\cos(\pi t)}\right) \ dt.
\end{split}
\end{equation*}
Then we extend the function
$$h(t) \ := \ \frac{E_{2k-1}(t)}{2\cos(\pi t)}, \hspace{.3in} t\in [0, 1/2),$$ 
in each summand, 
to $t = 1/2$ by continuity. By applying L'H\^opital's rule, we obtain
\begin{equation*} 
\lim_{t \to 1/2} \frac{E_{2k-1}(t)}{2\cos(\pi t)} \ = \ \frac{(2k-1)E_{2k-2}(1/2)}{-2\pi},
\end{equation*}
which is some constant. Thus, $h(t)$ is differentiable on $[0,1/2]$ with a continuous derivative, and, hence, most of the terms in the above integral now get cancelled, leaving us
\begin{align}\label{eq:telescopic sum2}
&\sum_{m=0}^{\infty}(-1)^m J(k-1,m)  \nonumber  \\
&= \lim_{N \to \infty}\int_{0}^{1/2}\left(E_{2k-1}(t)\frac{\cos(0)}{2\cos(\pi t)} +(-1)^N E_{2k-1}(t)\frac{\cos((2N+2)\pi t)}{2\cos(\pi t)}\right) \,dt \nonumber \\ 
&= \int_{0}^{1/2}\frac{E_{2k-1}(t)\sec(\pi t)}{2} \,dt + \lim_{N \to \infty}(-1)^N \int_{0}^{1/2} E_{2k-1}(t)\frac{\cos((2N+2)\pi t)}{2\cos(\pi t)} \,dt.
\end{align}
To further simplify the above expression, we will show that, in fact, 
\begin{equation}
    \label{eq:nullexpression}  \lim_{N\to\infty}(-1)^N \int_{0}^{1/2} E_{2k-1}(t)\frac{\cos((2N+2)\pi t)}{2\cos(\pi t)}dt \nonumber 
    \end{equation}
is null.  
Let $R$ denote $(2N+2)\pi$. The integral above then equals
\begin{equation*}
\begin{split}
& \lim_{N\to\infty}(-1)^N \int_{0}^{1/2} h(t)\cos(R t)dt\\
&= \lim_{N\to\infty}(-1)^N \left(h(1/2)\frac{\sin(R/2)}{R} - h(0)\frac{\sin(0)}{R} - \int_{0}^{1/2}h'(t)\frac{\sin(R t)}{R}dt\right).
\end{split}
\end{equation*}
Since $h(1/2), h(0)$, and $h'(t)$ are bounded, each summand approaches $0$ as $R \to \infty$, and therefore this limit is indeed 0. Thus, the summation \eqref{eq:telescopic sum2} is 
\begin{equation*}
    \sum_{m=0}^{\infty}(-1)^mJ(k-1,m) \ = \ \int_{0}^{1/2}\frac{E_{2k-1}(t)\sec(\pi t)}{2}dt.
\end{equation*}
Substituting this back into Equation \eqref{eq:betaJ} yields 
\[\beta(2k) \ = \ \frac{(-1)^{k-1}\pi^{2k}}{2(2k-1)!}\int_0^{1/2}E_{2k-1}(t)\sec({\pi t})dt,\]
for all positive integers $k$, as desired.
\end{proof}

\vspace{0.2in}

\noindent {\bf Acknowledgement.} We are grateful for the support from the Kibbe Science Fellowship from Bowdoin College. 

\vspace{0.2in}

\end{document}